\newcommand{\init}{\mathrm{Init}}
\newcommand{\unsafe}{\mathrm{Unsafe}}
\newcommand{\RR}{\mathbb{R}}
\newcommand{\tbts}[3]{
\begin{bmatrix}
{#1} & {#2} \\
{#2}^T & {#3}
\end{bmatrix}
}
\definecolor{fc}{RGB}{255,255,255}
\definecolor{sc}{RGB}{255,255,255}
\newtheorem{lemma}{Lemma}
\title{Factorization of Saddle-point Matrices in Dynamical Systems Optimization---Reusing Pivots}
\author{Jan Ku\v{r}\'{a}tko\thanks{ORCID: \href{http://orcid.org/0000-0003-0105-6527}{0000-0003-0105-6527}; Faculty of Mathematics and Physics, Charles University, Czech Republic; Institute of Computer Science, The Czech Academy of Sciences}\ \thanks{This work was supported by the Czech Science Foundation (GACR) grant number  GA15-14484S with institutional support RVO:67985807.} }
\begin{document}
\maketitle
\abstract{
In this paper we consider the application of direct methods for solving a sequence of saddle-point systems. Our goal is to design a method that reuses information from one factorization and applies it to the next one. In more detail, when we compute the pivoted $LDL^T$ factorization we speed up computation by reusing already computed pivots and permutations. 
We develop our method in the frame of dynamical systems optimization. Experiments show that the method improves efficiency over Bunch-Parlett while delivering the same results. \\[10mm]
\textbf{Keywords:} saddle-point matrix; symmetric indefinite factorization; dynamical systems; sequential quadratic programming 
}
\section{Introduction}
\label{sec:Intro}
Consider a sequence of saddle-point systems arising, for example, in Sequential Quadratic Programming (SQP)~\cite{LuksanVlcek:2001, Nocedal:2006}, that is
\[
K_ix_i = y_i,\quad i = 1,2, \ldots
\]
where $K_i\in \RR^{n \times n}$, $x_i\in \RR^n$ and $y_i \in \RR^n$. Here, we assume matrices $K_i$ to have the same structure of nonzero entries. Based on this we study how to solve the system $K_ix_i = y_i$, benefiting from information from previous iterations.
Our goal is to develop a strategy that reduces the amount of work spent on searching the matrix for pivots in direct methods. 
In more detail we describe our strategy in Section~\ref{sec:UpdatesBP} and compare its stability with Bunch-Parlett~\cite{Bunch:1971}.


We develop this strategy in the frame of direct methods for solving saddle-point systems that arise in a certain class of optimization problems arising in verification of dynamical systems~\cite{Kuratko:2016}. Here, one seeks a solution of a dynamical system that originates in a given set of initial states and reaches another set of states  that are to be avoided, unsafe states. Saddle-point matrices $K_i$ that arise have specific structure which we try to exploit.

Such optimization problem occur for example in control and verification of hybrid systems~\cite{Branicky:2006,Chutinan:2003,Mohlman:2013,Zutshi:2013} and in motion planning~\cite{Lamiraux:2004,Plaku:2009}.  In addition, the techniques described in this paper apply to general underdetermined boundary value problems for ordinary differential equations~\cite{Ascher:1981}. Moreover, similar saddle-point matrices to ours arise, for example, in the mixed and hybrid finite element discretizations~\cite{Tuma:2002}, in a class of interior point methods~\cite{Vavasis:1994,Vavasis:1996,Wright:1992}, and time-harmonic Maxwell equations~\cite{Greif:2007,Hiptmair:2002}.

The saddle-point matrices that arise in such optimization problems and applications are sparse, whether one uses the SQP method~\cite{LuksanVlcek:2001} or the interior-point method~\cite{Luksan:2004}. Hence, direct methods for solving the saddle-point system look promising. However, the naive application of straightforward $LDL^T$ factorization often results in a failure due to ill-conditioning~\cite{Kuratko:2016,Vavasis:1994,Vavasis:1996,Wright:1992} and the singularity of the $(1,1)$ block~\cite{Greif:2007,Hiptmair:2002}.  

Denoting the saddle-point matrix and its factorization by
\begin{align*}
PKP^T & = P\tbts{H}{B}{-C}P^T = LDL^T,
\end{align*}
in more detail, the main contributions of this paper are: description of a strategy for selecting and reusing pivots in the symmetric and indefinite $PKP^T = LDL^T$ factorization; analysis of the growth factor in the reduced matrices; numerical comparison with Bunch-Parlett and Bunch-Kaufman on a series of benchmarks from dynamical systems optimization; description and exploitation of a specific structure of the saddle-point matrix $K$ and the factor $L$ in dynamical systems optimization problem~\cite{KuratkoRatschan:2016}; Alg.~\ref{Alg:S=Hybrid} that switches from unpivoted to pivoted factorization of the matrix $K$, balancing the speed with the stability of computation.

The outline of the paper is as follows.
 In Section~\ref{sec:Motivation} we briefly review the optimization problem we try to solve~\cite{Kuratko:2016}. In Section~\ref{sec:SPM} we describe the structure of the saddle-point matrices. In Section~\ref{sec:LDLt} we will compute the $LDL^T$ factorization and prove that the factor $L$ has a banded structure of nonzero elements. Then the discussion about the implementation of the $LDL^T$ factorization follows and a hybrid method for solving the saddle-point system is described in Section~\ref{sec:Tweaks}. Sections~\ref{sec:UpdatesBP} contains a detailed description of reusing pivots and its effect on the stability of our method. Furthermore we include numerical results in Section~\ref{sec:BenchRes}. The whole paper is concluded with a summary and a brief discussion of results in Section~\ref{sec:Conclusion}. 
\section{Motivation}
\label{sec:Motivation}
Our motivation originates from the field of computer aided verification \cite{Branicky:2006,Lamiraux:2004,Zutshi:2013}. Consider a system of ordinary differential equations such that
\begin{equation}
	\label{eq:DiffEq}
	\dot{x}(t) = f(x(t)), \quad x(0) = x_0,
\end{equation}
where $x: \RR \to \RR^k$ is a function of variable $t \geq 0$, $x_0 \in \RR^k$ and $f: \RR^k\to\RR^k$ is  continuously differentiable. We denote the \emph{flow} of the vector field $f$ in~\eqref{eq:DiffEq} by $\Phi: \RR \times \RR^k \to \RR^k$ and for the fixed $x_0$ one has the solution $x(t)$ of~\eqref{eq:DiffEq}, where $x(t) = \Phi(t, x_0)$ for $t \geq 0$.

Denote the set of initial states by $\init$ and the set of states we try to avoid by $\unsafe$. Our goal is to find any solution $x(t)$ of~\eqref{eq:DiffEq} such that $x_0 \in \init$ and $\Phi(t_f, x_0) \in \unsafe$ for some $t_f > 0$, if it exists. 

In the previous work~\cite{Kuratko:2016} we solve this boundary value problem by the \emph{multiple-shooting} method~\cite{Ascher:1998}. That is, one computes a solution of~\eqref{eq:DiffEq} from shorter solution segments. Suppose we have $N$ solution segments of~\eqref{eq:DiffEq} such that their initial states are denoted by $x_0^i$ and their lengths by $t_i > 0$ for $1 \leq i \leq N$. Then the desired solution to our problem satisfies: $x_0^1 \in  \init$, $x_0^{i+1} = \Phi(t_i, x_0^i)$ for $1 \leq i \leq N-1$ (these are the matching conditions), and $\Phi(t_N, x_0^N) \in \unsafe$.

Boundary conditions $x_0^1 \in \init$ and $\Phi(t_N, x_0^N) \in \unsafe$ can be formulated either as equalities (points belong to the boundaries of the sets), or it can be given as inequalities (points are insides of sets). Either way there are infinitely many solutions~\cite{KuratkoRatschan:2016}, therefore, one needs to introduce a regularization. In the paper~\cite{KuratkoRatschan:2016} we formulate an objective function in the form $\sum t_i^2$, where $t_i$ is the length of the $i$-th solution segment, that drives the solution segments to have the same lengths.

In the end one solves a general nonlinear programming problem with $N(k+1)$ parameters, where those parameters are lengths of solution segments $t_i > 0$ and initial states $x_0^i \in \RR^k$, $1 \leq i  \leq N$. From now onwards we denote the number of parameters by $n = N(k+1)$ and the number of constraints by $m = (N-1)k + 2$.

%
%
%
%
\section{Blocks of Saddle-point Matrix}
\label{sec:SPM}
The Line-search SQP method described in~\cite{Kuratko:2016} requires in each iteration the solution of the saddle-point system~\eqref{eq:KKTsystem}
\begin{equation}
\tbts{H}{B}{-C}
\begin{bmatrix}
x \\
y
\end{bmatrix}
=
\begin{bmatrix}
f \\
g
\end{bmatrix}
\quad \textrm{or} \quad
Ku = b\,,
\label{eq:KKTsystem}
\end{equation}
where $H \in \RR^{n \times n}$, $B \in \RR^{n \times m}$ with $n \geq m$ and $C \in \RR^{m \times m}$.

The elements of the matrix $B$ and $C$ are recomputed and the matrix $H$ is updated block by block by the \emph{BFGS} scheme. The structure of nonzero elements of the matrix $B$ in~\eqref{eq:HandB} remains the same throughout the iterations. 

Since we are interested in the $LDL^T$ factorization of the saddle-point matrix $K$ that arises in the solution of reachability problems for dynamical systems~\cite{Kuratko:2016}, the blocks $H$ and $B$ have the form
\begin{equation}
\label{eq:HandB}
H
=
\begin{bmatrix}
H_1 & & \\
 & \ddots & \\
  & & H_N
\end{bmatrix}\,,
\quad
B
= 
\begin{bmatrix}
v & -M_1^T  & &   & & & \\
&-v_1^T &   &&  & & \\
&I &  - M_2^T & & & & \\
 & & -v_2^T & & & & \\
 & &  I &  & & & \\
& &   & \ddots&  & & \\
 & & & & & & \\
& &   &  &   & -M_{N-1}^T  & \\
& &  & &  &  -v_{N-1}^T & \\
& &  & &  & I &w\\ 
 & &  & &  & 0 & \beta
\end{bmatrix}\,.
\end{equation}
The matrix $H$ consists of blocks $H_i \in \RR^{(k+1) \times (k+1)}$, $1 \leq i \leq N$.  The matrix $C = diag(\gamma_1, 0, \ldots, 0, \gamma_2)$, where $\gamma_i \geq 0$ for $i = 1,2$. In the matrix $B$, there are blocks $M_i \in \RR^{k \times k}$ and vectors $v_i \in \RR^{k}$, $1 \leq i \leq N-1$. The matrix $I \in \RR^{k \times k}$ is the identity matrix of order $k$, the vectors $v$ and $w$ are nonzero and belong to $\RR^{k}$, and $\beta$ is a nonzero scalar. The first and the last columns of the matrix $B$ correspond to the boundary constraints $x_0^1 \in \init$ and $\Phi(t_N, x_0^N) \in \unsafe$.  

A similar banded structure of nonzero elements to the one in the matrix $B$ arises, for example, when one solves boundary value problems for ordinary differential equations~\cite[Sec.~5]{Ascher:1981}. However, we have additional entries $v_i$ in~\eqref{eq:HandB} because we consider the lengths of time intervals $t_i$ (the lengths of solution segments) to be parameters and not fixed values.

The saddle-point matrix $K$ satisfies the following conditions~\cite{Kuratko:2016}: the matrix $H$ is symmetric positive definite (\emph{BFGS} approximations of the Hessian), and the matrix $B$ has full column rank. Under these conditions the saddle point matrix $K$ is: nonsingular~\cite[Th.~3.1]{Benzi:2005}, indefinite~\cite[Th.~3.5]{Benzi:2005}, strongly factorizable~\cite[Th.~2.1]{Vanderbei:1995}.  

\section{$LDL^T$ Factorization}
\label{sec:LDLt}
In this section we give formulas for the $LDL^T$ factorization of the saddle-point matrix $K$ with blocks~\eqref{eq:HandB}. In addition, we describe the structure of nonzero elements of the unit lower triangular factor $L$ for which $K = LDL^T$. 

A standard approach~\cite[Ch.~4.1]{Golub:1996} to solving the linear system $Ku = b$, where $K$ is symmetric and nonsingular, is Alg.~\ref{Alg:$LDL^T$ Factorization}. 
\begin{algorithm}
\caption{Solve $Ku = b$ for $u$ by $LDL^T$ factorization}
\label{Alg:$LDL^T$ Factorization}
\begin{algorithmic}
\State Input: $K$ and the right-hand side vector $b$
\State Factorize the matrix $K = LDL^T$\Comment{Alg.~\ref{Alg:L}}
\State Solve $Lz = b$ for $z$ \Comment{Forward elimination}
\State Solve $Dw = z$ for $w$
\State Solve $L^Tu = w$ for $u$ \Comment{Back substitution}
\State Output: the solution $u$ to the system $Ku = b$
\end{algorithmic}
\end{algorithm}

Denote by $L_HD_HL_H^T$ the $LDL^T$ factorization of the matrix $H$ and by $L_SD_SL_S^T$ the $LDL^T$ factorization of the Schur complement $S = K/H$, where  $ S = -C - B^TH^{-1}B$. When forming the factor $L$ one factorizes $H = L_HD_HL_H^T$ and then $S = L_SD_SL_S^T$. Then
\begin{align}
\tbts{H}{B}{-C} & =
 \begin{bmatrix}
 L_H & 0 \\
 B^TL_H^{-T}D_H^{-1} & I
 \end{bmatrix}
 \begin{bmatrix}
 D_H & 0 \\
 0 & S
 \end{bmatrix}
 \begin{bmatrix}
 L_H^T & D_H^{-1}L_H^{-1}B \\
 0 & I
 \end{bmatrix}\, \notag \\
\label{eq:L} 
 & = 
  \begin{bmatrix}
 L_H & 0 \\
 B^TL_H^{-T}D_H^{-1} & L_S
 \end{bmatrix}
 \begin{bmatrix}
 D_H & 0 \\
 0 & D_S
 \end{bmatrix}
 \begin{bmatrix}
 L_H^T & D_H^{-1}L_H^{-1}B \\
 0 & L_S^T
 \end{bmatrix}\, \\
 & = LDL^T\,. \notag
\end{align}
Computation of the factor $L$ is summarized in Alg.~\ref{Alg:L} and follows the framework of the solution of \emph{equilibrium systems}~\cite[p.~170]{Golub:1996}.
\begin{algorithm}
\caption{Factorize the matrix $K = LDL^T$}
\label{Alg:L}
\begin{algorithmic}
\State Input: $K$
\State Factorize the matrix  $H = L_HD_HL_H^T$ \Comment{$LDL^T$ factorization of $H$ as in \eqref{eq:L_H}}
\State Solve $L_HD_HX = B$ for $X \in \RR^{n \times m}$
\State $S \leftarrow -C - X^TD_HX $ \Comment{Schur complement $S$}
\State Factorize the matrix $S = L_SD_SL_S^T$ \Comment{$LDL^T$ factorization of $S$ in Alg.~\ref{Alg:S=LDLt}}
\State $D \leftarrow diag(D_H, D_S)$
\State $L \leftarrow \begin{bmatrix}
L_H & 0 \\
X^T & L_S
\end{bmatrix}
$
\State Output: factors $L$ and $D$ such that $K =LDL^T$
\end{algorithmic}
\end{algorithm}

Matrix $H$ from~\eqref{eq:HandB} is block diagonal, then
\begin{equation}
\label{eq:L_H}
L_HD_HL_H^T
=
\begin{bmatrix}
L_{H, 1} & & \\
 & \ddots & \\
  & & L_{H,N}
\end{bmatrix}
\begin{bmatrix}
D_{H, 1} & & \\
 & \ddots & \\
  & & D_{H, N}
\end{bmatrix}
\begin{bmatrix}
L_{H, 1}^T & & \\
 & \ddots & \\
  & & L_{H,N}^T
\end{bmatrix},
\end{equation}
where $L_{H, i} \in \RR^{(k+1) \times (k+1)}$ is unit lower triangular and $D_{H, i} \in \RR^{(k+1) \times (k+1)}$ is diagonal, $1 \leq i \leq N$. 
\begin{lemma}
\label{lem:L_H}
Let $K = LDL^T$ be the $LDL^T$ factorization of the saddle-point matrix~\eqref{eq:KKTsystem} with blocks given in~\eqref{eq:HandB}. Then the $(1,1)$ block $L_H$ of the factor $L$ is block diagonal and its blocks $L_{H,i}\in \RR^{(k+1) \times (k+1)}$,$1 \leq i \leq N$, are unit lower triangular.
\end{lemma}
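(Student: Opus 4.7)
The plan is to leverage the block diagonal structure of $H$ shown in (3) together with the uniqueness of the $LDL^T$ factorization for a symmetric positive definite matrix. First I would recall from the derivation in equation~(4) that the $(1,1)$ block of the factor $L$ is precisely the unit lower triangular factor $L_H$ arising in $H = L_H D_H L_H^T$. So the claim about $L_H$ reduces to a claim about the $LDL^T$ factor of $H$ alone.

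Next I would exploit the fact that $H = \mathrm{diag}(H_1,\dots,H_N)$ with each $H_i \in \RR^{(k+1)\times(k+1)}$. Since $H$ is symmetric positive definite (as stated at the end of Section~\ref{sec:SPM}, from the BFGS update), each diagonal block $H_i$ inherits symmetric positive definiteness, and therefore each $H_i$ admits a unique $LDL^T$ factorization $H_i = L_{H,i} D_{H,i} L_{H,i}^T$ with $L_{H,i}$ unit lower triangular and $D_{H,i}$ diagonal with positive entries. Assembling these block-by-block gives the block diagonal product on the right-hand side of~(5), which by direct multiplication equals $H$.

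Finally, I would invoke uniqueness of the $LDL^T$ factorization of a symmetric positive definite matrix: since both $L_H D_H L_H^T$ and the block diagonal product in~(5) are valid $LDL^T$ factorizations of the same $H$, the factors must coincide. Hence $L_H$ is block diagonal with blocks $L_{H,i}$ as claimed, and each $L_{H,i}$ is unit lower triangular.

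There is no real obstacle here; the only point worth stating carefully is that positive definiteness of $H$ passes to each diagonal block $H_i$, which guarantees that the unpivoted $LDL^T$ factorization of each $H_i$ exists and is unique, so the block-by-block assembly is justified.
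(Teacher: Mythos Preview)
Your proposal is correct and follows essentially the same approach as the paper: both argue that $H$ being symmetric positive definite forces each diagonal block $H_i$ to be symmetric positive definite, factor each $H_i$ individually, and assemble the results into the block diagonal form~(5). Your version is slightly more explicit in invoking uniqueness of the $LDL^T$ factorization to identify $L_H$ with the block diagonal assembly, whereas the paper simply asserts that the factorization ``can be written'' in that form.
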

\begin{proof}
The matrix $H$~\eqref{eq:HandB} is symmetric positive definite, therefore, each $H_i$, $1 \leq i \leq N$, is symmetric positive definite. When one carries out the $LDL^T$ factorization of $H_i$, then one obtains factors $L_{H,i}$ and $D_{H,i}$ such that $H_i = L_{H,i}D_{H,i}L_{H,i}^T$ for $1 \leq i \leq N$. The $LDL^T$ factorization of $H$ can be written in the matrix form as in~\eqref{eq:L_H}, where $L_{H,i}$ is unit lower triangular and $D_{H,i}$ is diagonal with the $1 \times 1$ pivots on the diagonal, $1 \leq i \leq N$.
\end{proof}
We proceed with the computation of $B^TL_H^{-T}D_H^{-1}$ in the factor $L$ in~\eqref{eq:L}. 
\begin{lemma}
\label{lem:BLD}
Let $K = LDL^T$ be the $LDL^T$ factorization of the saddle-point matrix~\eqref{eq:KKTsystem} with blocks given in~\eqref{eq:HandB}. Then
\begin{equation}
\label{eq:BTL-TH-1}
B^TL_H^{-T}D_H^{-1} 
=
\begin{bmatrix}
s_1^T & & & & & \\
X_1 & Y_2  & & & & \\
 & X_2 & Y_3 & & & \\
  & & \ddots & &   & \\
 & & & X_{N-2}&  Y_{N-1} & \\
 & & & & X_{N-1} & Y_N  \\
 & & & & &  s_2^T
\end{bmatrix}\,,
\end{equation}
where 
\begin{align*}
s_1 & = D_{H,1}^{-1}L_{H,1}^{-1}\begin{bmatrix} v \\ 0 \end{bmatrix} \in \RR^{k+1}, \\ 
X_i & = [M_i\ v_i]L_{H,i}^{-T}D_{H,i}^{-1} \in \RR^{k \times (k+1)}\,, \\
Y_i & = [I\ 0]L_{H,i}^{-T}D_{H,i}^{-1} \in \RR^{k \times (k+1)}\,,\\
s_2 & =  D_{H,N}^{-1}L_{H,N}^{-1}\begin{bmatrix} w \\ \beta \end{bmatrix} \in \RR^{k+1}, 
\end{align*}
where $[v^T\ 0]^T \in \RR^{k+1}$, $[w^T\ \beta]^T \in \RR^{k+1}$, $[I\ 0] \in \RR^{k \times (k+1)}$ and $[M_i\ v_i] \in \RR^{k \times (k+1)}$ for $1 \leq i \leq N-1$.
\end{lemma}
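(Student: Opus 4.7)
The plan is to invoke Lemma~\ref{lem:L_H} to get a block-diagonal structure for $L_H$, and then to read the blocks of the product off the banded sparsity of $B^T$.

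First I would note that by Lemma~\ref{lem:L_H} the factor $L_H$ is block diagonal with $(k+1) \times (k+1)$ diagonal blocks $L_{H,i}$, and each $D_{H,i}$ produced by the $LDL^T$ factorization of $H_i$ is diagonal. Consequently $D_H = \operatorname{diag}(D_{H,1}, \ldots, D_{H,N})$ is also block diagonal, and so is $L_H^{-T} D_H^{-1}$, with diagonal blocks $L_{H,i}^{-T} D_{H,i}^{-1}$.

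Next I would partition $B$ into $N$ block-rows of height $k+1$, consistent with the block partition of $H$, and inspect the display of $B$ in~\eqref{eq:HandB}. The $i$-th block-row of $B$ is nonzero only in at most two column-groups: those for the $(i-1)$-th and $i$-th constraints (reading the $\init$ column as the ``$0$-th'' and the $\unsafe$ column as the ``last''). Transposing, each row of $B^T$ has nonzero support contained in at most two consecutive column-blocks of size $k+1$ that align with the block partition of $L_H$.

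Because $L_H^{-T} D_H^{-1}$ is block diagonal, right-multiplying $B^T$ by it preserves the column-support of each row. Hence the product inherits the bidiagonal block-sparsity pattern asserted in the statement, and it remains to identify each block. The first row of $B^T$ is nonzero only inside block~1 with entries $\bigl[v^T\ 0\bigr]$; right-multiplication by $L_{H,1}^{-T} D_{H,1}^{-1}$ yields $s_1^T$ (using $D_{H,1}^{-T} = D_{H,1}^{-1}$). For the $i$-th matching condition, the $k$ corresponding rows of $B^T$ contribute $\bigl[-M_i\ -v_i\bigr]$ in block $i$ and $\bigl[I\ 0\bigr]$ in block $i+1$, which after multiplication become (up to sign) $X_i$ and $Y_{i+1}$. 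The last row of $B^T$ gives $\bigl[w^T\ \beta\bigr] L_{H,N}^{-T} D_{H,N}^{-1} = s_2^T$ by the same argument.

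The only real obstacle here is careful bookkeeping: matching up block-row and block-column indices between $B$ as displayed in~\eqref{eq:HandB} and the target matrix, and reconciling the sign conventions between the $-M_i^T, -v_i^T$ entries of $B$ and the sign-free definitions of $X_i$, $Y_i$ in the claim.
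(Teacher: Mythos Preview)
Your proposal is correct and follows exactly the same route as the paper's own proof, which simply says the result follows from direct block multiplication using the block-diagonality of $L_H$ (and hence of $L_H^{-1}$); you have merely spelled out the bookkeeping that the paper omits. Your closing remark about the sign is well taken: with the definitions as stated, the block in position $(i,i)$ of the product is actually $-X_i$, not $X_i$, so the lemma as printed carries a harmless sign slip that you correctly flagged.
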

\begin{proof}
The result follows from the direct computation of the matrix product $BL_H^{-T}D_H^{-1}$. The matrix $B$ is given in~\eqref{eq:HandB} and factors $L_H$ and $D_H$ in~\eqref{eq:L_H}. Since $L_H$ is block diagonal, then its inverse $L_{H}^{-1}$ is also block diagonal with blocks of the same size.
\end{proof}
To finish the description of the factor $L$ in~\eqref{eq:L} one needs to compute the Schur complement $S = K/H$ and factorize it. Lemma~\ref{lem:-S} shows the block $3$-diagonal structure of the Schur complement $S$.
\begin{lemma}
The Schur complement $S = -C - B^TH^{-1}B$ has the form
\label{lem:-S}
\begin{equation}
\label{eq:SchurCompl}
S
=
-
\begin{bmatrix}
\alpha_1 & w_1^T & & & & & \\
w_1 & V_1 & W_1^T & & & &\\
    &  W_1 & V_2 & & & \\
    &   &  & \ddots& & \\
    & & &  & V_{N-2} & W_{N-2}^T & \\
    & & & &  W_{N-2} & V_{N-1}& w_2 \\
    & & & &                  & w_2^T & \alpha_2
\end{bmatrix}\,,
\end{equation}
where
\begin{align*}
\alpha_1 & = 
[v^T\ 0]H_1^{-1}
\begin{bmatrix}
v \\
0
\end{bmatrix} + \gamma_1 \in \RR \,, \\
w_1 & = 
[M_1^T\ v_1]H_1^{-1}
\begin{bmatrix}
v \\
0
\end{bmatrix} \in \RR^{k}\,, \\
V_i & =
[M_i^T\ v_i]H_i^{-1}
\begin{bmatrix}
M_i \\
v_i^T
\end{bmatrix}
+
[I\ 0]H_{i+1}^{-1}
\begin{bmatrix}
I \\
0
\end{bmatrix} \in \RR^{k\times k}\,, \\
W_i & = 
[M_{i+1}^T\ v_{i+1}]H_{i+1}^{-1}
\begin{bmatrix}
I \\
0
\end{bmatrix} \in \RR^{k\times k}\,, \\
w_2 & = 
[I\ 0]H_N^{-1}
\begin{bmatrix}
w \\
\beta
\end{bmatrix} \in \RR^k \,, \\
\alpha_2 & = 
[w^T\ \beta]H_N^{-1}
\begin{bmatrix}
w \\
\beta
\end{bmatrix} + \gamma_2 \in \RR\,,
\end{align*}
with $[v^T\ 0]^T \in \RR^{k+1}$, $[w^T\ \beta]^T \in \RR^{k+1}$, $[I\ 0] \in \RR^{k \times (k+1)}$, $\gamma_1 \geq 0$ and $\gamma_2 \geq 0$ from the matrix $C$,  and $[M_i^T\ v_i] \in \RR^{k \times (k+1)}$ for $1 \leq i \leq N-1$.
\end{lemma}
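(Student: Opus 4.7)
The plan is to exploit the structures of $H$ and $B$ in order to reduce $S = -C - B^{T}H^{-1}B$ to a small collection of block products, and then read off the tridiagonal pattern in~\eqref{eq:SchurCompl} directly.

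Since $H = \mathrm{diag}(H_{1}, \ldots, H_{N})$ is block diagonal with symmetric positive definite blocks (Lemma~\ref{lem:L_H}), its inverse is $H^{-1} = \mathrm{diag}(H_{1}^{-1}, \ldots, H_{N}^{-1})$. I would then partition the columns of $B$ into $N+1$ groups following~\eqref{eq:HandB}: the first column (the initial boundary constraint), the $k$ columns coming from the $i$-th matching condition for $1 \le i \le N-1$, and the last column (the final boundary constraint). A direct inspection of~\eqref{eq:HandB} shows that each group has nonzero entries confined to at most two consecutive row blocks of height $k+1$: the boundary groups touch only row block $1$ or row block $N$, while matching group $i$ occupies row blocks $i$ and $i+1$. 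Because $H^{-1}$ is block diagonal, two column groups can contribute to a given block of $B^{T}H^{-1}B$ only when they share a common row block, which happens exactly when their indices differ by at most one. This already yields the block tridiagonal sparsity pattern of $S$.

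The remaining step is a block-by-block multiplication for each surviving entry. The matrix $-C$ contributes $\gamma_{1}$ and $\gamma_{2}$ only to the two corner diagonal entries, producing $\alpha_{1}$ and $\alpha_{2}$. The couplings between a boundary group and its neighbouring matching group pick up contributions from a single row block and yield $w_{1}$ and $w_{2}$. Each matching-matching diagonal block $V_{i}$ is a sum of two contributions coming from the two row blocks spanned by matching group $i$, while the off-diagonal $W_{i}$ receives only the contribution from the single row block shared by matching groups $i$ and $i+1$. The signs from $-C - B^{T}H^{-1}B$ and from the $-M_{i}^{T}, -v_{i}^{T}$ entries of $B$ combine so that the overall minus sign displayed in~\eqref{eq:SchurCompl} can be factored out.

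Conceptually there is nothing deep here; the main obstacle is bookkeeping. One has to treat the two boundary column groups as distinguished cases (they produce scalar rather than $k \times k$ diagonal entries and pick up the $\gamma_{j}$ corrections from $C$), and to track transposes and signs carefully so that the resulting expressions for $\alpha_{j}$, $w_{j}$, $V_{i}$ and $W_{i}$ agree with the statement of the lemma.
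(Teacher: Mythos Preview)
Your argument is correct and follows the same approach as the paper: the paper's proof simply notes that the result follows from the direct matrix product $B^{T}H^{-1}B$ using the block-diagonal structure of $H^{-1}$ and the form of $B$ in~\eqref{eq:HandB}, with $C=\mathrm{diag}(\gamma_{1},0,\ldots,0,\gamma_{2})$ affecting only $\alpha_{1}$ and $\alpha_{2}$. Your write-up is essentially a more detailed version of this one-line computation, spelling out why only adjacent column groups of $B$ interact through $H^{-1}$.
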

\begin{proof}
The result follows from the matrix product $BH^{-1}B^T$, where $H^{-1}$ is a block diagonal matrix and $B$ is given in~\eqref{eq:HandB}. The matrix $C = diag(\gamma_1, 0, \ldots, 0, \gamma_2)$, therefore, it only affects the values $\alpha_1$ and $ \alpha_2$ in~\eqref{eq:SchurCompl}.
\end{proof}
The Schur complement $S$ is tri-block diagonal. We shall illustrate the process of $LDL^T$ factorization for $N = 3$. Then
\[
-S = 
\begin{bmatrix}
\alpha_1 & w_1^T & &  \\
w_1 & V_1 & W_1^T & \\
    &  W_1 & V_2 & w_2  \\
    & &w_2^T & \alpha_2  \\            
\end{bmatrix}
\longrightarrow 
\begin{bmatrix}
1 &  & &  \\
w_1/\alpha_1 & \hat{V}_1 & W_1^T & \\
    &  W_1 & V_2 & w_2  \\
    & &w_2^T & \alpha_2  \\            
\end{bmatrix}\,,
\]
where $\hat{V}_1 = V_1 - w_1w_1^T/\alpha_1$. Once we form the $LDL^T$ factorization of $\hat{V}_1 = \hat{L_1}\hat{D_1}\hat{L}_1^T$, then
\[
\begin{bmatrix}
1 &  & &  \\
w_1/\alpha_1 & \hat{V}_1 & W_1^T & \\
    &  W_1 & V_2 & w_2  \\
    & &w_2^T & \alpha_2  \\            
\end{bmatrix}
\longrightarrow
\begin{bmatrix}
1 &  & &  \\
w_1/\alpha_1 & \hat{L}_1 &  & \\
    &  W_1\hat{L}_1^{-T}\hat{D}_1^{-1} & \hat{V}_2 & w_2  \\
    & &w_2^T & \alpha_2  \\            
\end{bmatrix}\,,
\]
where $\hat{V}_2 = V_2 - W_1\hat{L}_1^{-T}\hat{D_1}^{-1}\hat{L}_1^{-1}W_1^T = V_2 - W_1\hat{V}_1^{-1}W_1^T$. Once more we form the $LDL^T$ factorization of $\hat{V}_2 = \hat{L}_2\hat{D}_2\hat{L}_2^T$, then
\[
\begin{bmatrix}
1 &  & &  \\
w_1/\alpha_1 & \hat{L}_1 &  & \\
    &  W_1\hat{L}_1^{-T}\hat{D}_1^{-1} & \hat{V}_2 & w_2  \\
    & &w_2^T & \alpha_2  \\            
\end{bmatrix}
\longrightarrow
\begin{bmatrix}
1 &  & &  \\
w_1/\alpha_1 & \hat{L}_1 &  & \\
    &  W_1\hat{L}_1^{-T}\hat{D}_1^{-1} & \hat{L}_2 &   \\
    & &w_2^T\hat{L}_2^{-T}\hat{D}_2^{-1} & \hat{\alpha}_2  \\            
\end{bmatrix}\,,
\]
where $\hat{\alpha}_2 = \alpha_2 - w_2^T\hat{L}_2^{-T}\hat{D}_2^{-1}\hat{L}_2^{-1}w_2 = \alpha_2 - w_2^T\hat{V}_2^{-1}w_2$. Finally, we put $\hat{\alpha}_2$ into the diagonal matrix $D_S$ and set the last elemental on the diagonal in $L_S$ to one. 
\begin{lemma}
\label{lem:L_s}
Let $S = L_SD_SL_S^{T}$ be the $LDL^T$ factorization of the Schur complement $S = -C - B^TH^{-1}B$, then
\begin{equation}
\label{eq:L_s}
L_S
=
\begin{bmatrix}
1 & & & & & \\
l_1& \hat{L}_1 & & & & \\
 & W_1\hat{L}_1^{-T}D_{S,1}^{-1} & \hat{L}_2 & & & \\
 & & \ddots & & & \\
 & & & W_{N-2}\hat{L}_{N-2}^{-T}D_{S,N-2}^{-1}  & \hat{L}_{N-1} & \\
 & & & & l_{2}^T & 1
\end{bmatrix}\,,
\end{equation}
where $\hat{L}_i \in \RR^{k\times k}$, $1 \leq i \leq N-1$, are unit lower-triangular, $l_1 = w_1/\alpha_1 \in \RR^k$, $l_2 = \hat{D}_{N-1}^{-T}\hat{L}_{N-1}^{-1}w_2 \in \RR^k$ and the diagonal matrix $D_{S}$ is such that
\begin{equation}
\label{eq:D_s}
-D_S
=
\begin{bmatrix}
d_1 & & & & \\
 & D_{S,1} & & & \\
 & & \ddots & & \\
 & & & D_{S,N-1} & \\
 & & & & d_N
\end{bmatrix}\,,
\end{equation}
where $d_1 \in \RR$, $d_2 \in \RR$ and $D_{S,i} \in \RR^{k \times k}$ for $1 \leq i \leq N-1$. Here $d_1 = \alpha_1$ from~\eqref{eq:SchurCompl} and $d_N = \alpha_2 - w_2^T\hat{L}_{N-1}^{-T}D_{S,N-1}^{-1}\hat{L}_{N-1}^{-1}w_2$. The scalar $\alpha_2$ and the vector $w_2$ come from~\eqref{eq:SchurCompl}.
\end{lemma}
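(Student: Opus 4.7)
The plan is to mimic the step-by-step elimination already illustrated for $N=3$ immediately before the statement, and to promote it to an induction on the block rows of the tri-block-diagonal matrix $-S$ in~\eqref{eq:SchurCompl}. Since $K$ is strongly factorizable (as recalled in Section~\ref{sec:SPM}) and $H$ is symmetric positive definite, $S$ itself admits an $LDL^T$ factorization without pivoting, so we can read off the blocks of $L_S$ and $D_S$ one block row at a time without worrying about well-posedness.

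First I would handle the base step. The leading $1\times 1$ block of $-S$ is the scalar $\alpha_1$, so $d_1 = \alpha_1$ and the entry of $L_S$ immediately below the $(1,1)$ position becomes $l_1 = w_1/\alpha_1$; by the sparsity of $-S$ the only block updated by this elimination is the second diagonal block, which changes from $V_1$ to $\hat V_1 = V_1 - w_1 w_1^T/\alpha_1$. For the inductive step, suppose elimination has reduced $-S$ so that the current active leading block is $\hat V_i$ (with unchanged off-diagonal blocks $W_i$, $W_i^T$, and the original $V_{i+1},\dots,V_{N-1},\alpha_2$, $W_{i+1},\dots,W_{N-2},w_2$ beyond it). Factorize $\hat V_i = \hat L_i D_{S,i}\hat L_i^T$; this places $\hat L_i$ on the block diagonal of $L_S$ and $D_{S,i}$ on the block diagonal of $D_S$. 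The standard $2\times 2$ block identity
\begin{equation*}
\tbts{\hat V_i}{W_i^T}{V_{i+1}}
=
\begin{bmatrix}\hat L_i & 0 \\ W_i\hat L_i^{-T}D_{S,i}^{-1} & I\end{bmatrix}
\begin{bmatrix}D_{S,i} & 0 \\ 0 & V_{i+1}-W_i\hat V_i^{-1}W_i^T\end{bmatrix}
\begin{bmatrix}\hat L_i^T & D_{S,i}^{-1}\hat L_i^{-1}W_i^T \\ 0 & I\end{bmatrix}
\end{equation*}
then puts $W_i\hat L_i^{-T}D_{S,i}^{-1}$ at the claimed sub-diagonal position of $L_S$ and makes $\hat V_{i+1} = V_{i+1} - W_i\hat V_i^{-1}W_i^T$ the next active diagonal block.

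Finally, the last block of $-S$ is the scalar $\alpha_2$ and the sub-diagonal block entering it is the row $w_2^T$. The same elimination rule turns $w_2^T$ into $l_2^T = w_2^T\hat L_{N-1}^{-T}D_{S,N-1}^{-1}$ and produces the trailing Schur complement $\hat\alpha_2 = \alpha_2 - w_2^T\hat V_{N-1}^{-1}w_2$, which becomes $d_N$, while the bottom-right entry of $L_S$ is $1$. Collecting all block rows yields exactly the structures~\eqref{eq:L_s} and~\eqref{eq:D_s}.

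I expect the only delicate point to be the bookkeeping that each elimination touches only the current pivot block and its two adjacent neighbors, so that the next sub-diagonal block $W_{i+1}$ (or $w_2$ at the end) appears in the active part for the first time at the following step. This is precisely where the tri-block-diagonal sparsity of $-S$ established in Lemma~\ref{lem:-S} is used, and it is what rules out fill-in between non-adjacent block rows and hence guarantees the banded shape of $L_S$.
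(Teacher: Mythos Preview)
Your proposal is correct and follows exactly the approach the paper itself takes: the paper does not give a formal proof of Lemma~\ref{lem:L_s} but instead illustrates the block-by-block elimination for $N=3$ immediately before the statement, and your induction simply formalizes that computation for general $N$. One minor remark: your well-posedness argument via strong factorizability of $K$ is fine, but you could shorten it by observing directly that $-S = C + B^TH^{-1}B$ is positive definite (since $B$ has full column rank, $H$ is positive definite, and $C$ is positive semidefinite), which immediately gives existence of the unpivoted $LDL^T$ factorization of $-S$.
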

Lemmas~\ref{lem:L_H},~\ref{lem:BLD} and~\ref{lem:L_s} describe the block structure of the factor $L$. Note that both matrices $B^TL_H^{-T}D_H^{-1}$ and $L_S$ are banded and their width is independent of $N$. In Fig.~\ref{fig:KKT_KKTldlt}, it is illustrated what the structures of nonzero entries of the saddle-point matrix $K$ and its factor $L$ are. 
\begin{figure}
\centering
\includegraphics[width=0.45\linewidth]{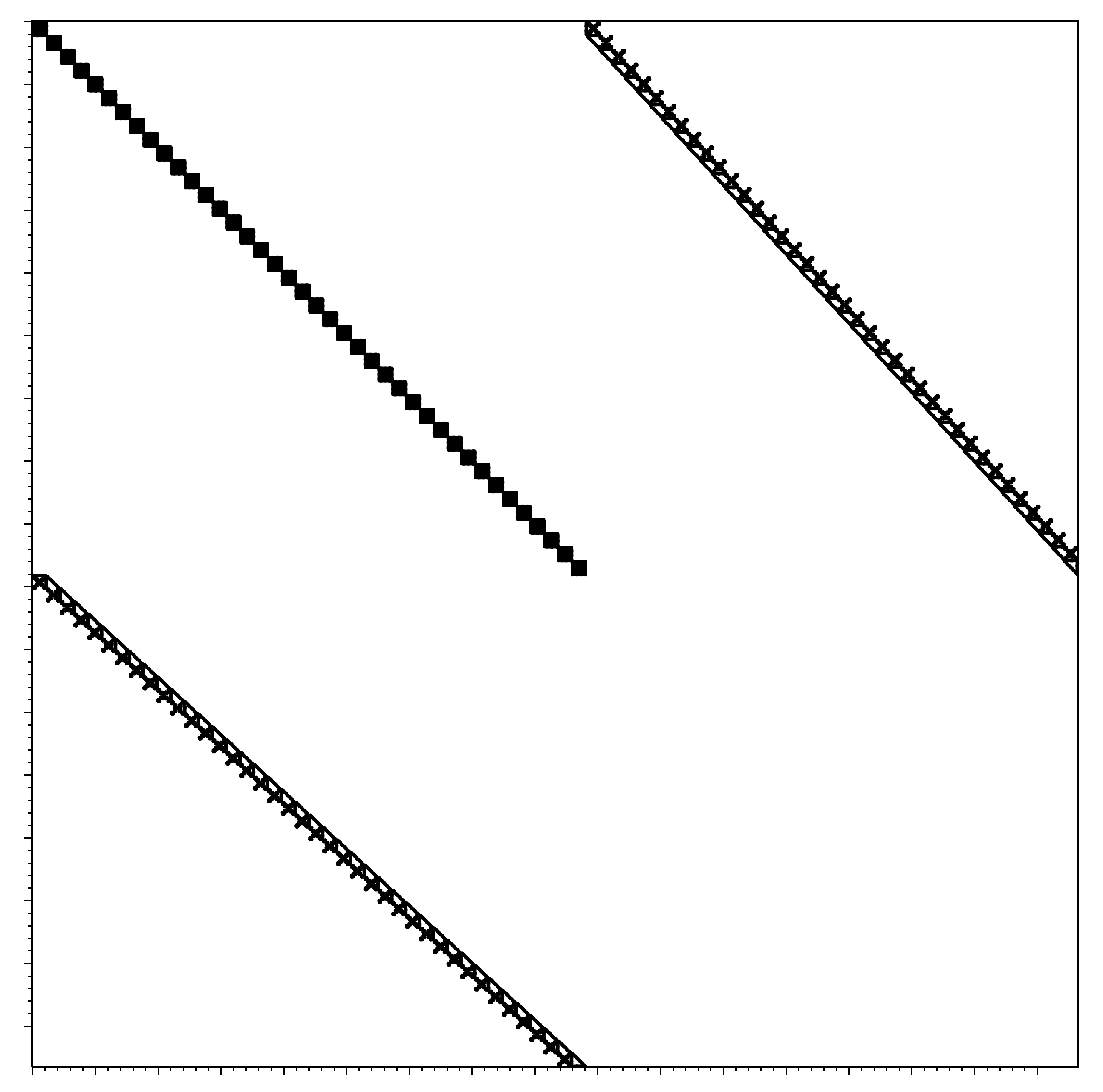}
\hfill
\includegraphics[width=0.45\linewidth]{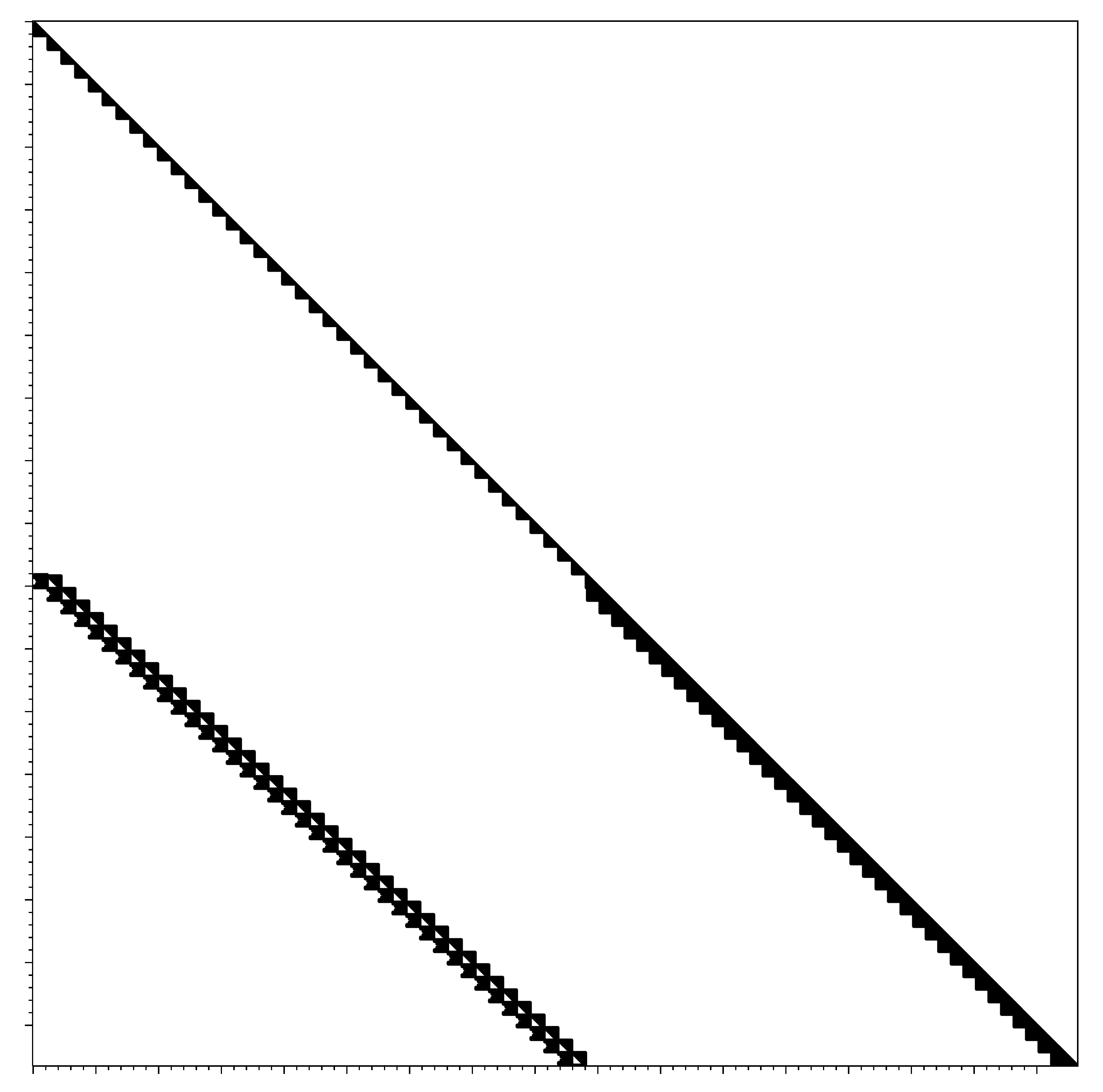}
\caption{The structure of nonzero entries: The saddle-point matrix $K$ on the left hand side and the unit lower triangular factor $L$ on the right hand side . For this instance, there are $9250$ nonzero elements in $K$ and $14 830$ nonzero elements in $L$. The dimension of matrices is $832 \times 832$.}
\label{fig:KKT_KKTldlt}
\end{figure}

We conclude this section by the observation that for the block $L_S$ the option for parallel computation of blocks $V_i$ and $W_i$ at the same time in~\eqref{eq:SchurCompl} is no longer available as in the case of the block $L_H$ in~\eqref{eq:L_H}. However, we do not need to keep the whole Schur complement $S$ in memory to get $L_S$ as it is shown in the discussion preceding Lemma~\ref{lem:L_s} and summarized in Alg.~\ref{Alg:S=LDLt}.
\begin{algorithm}
\caption{Factorize the matrix $S = L_SD_SL_S^T$}
\label{Alg:S=LDLt}
\begin{algorithmic}
\State Input: matrix $S$ of the form~\eqref{eq:SchurCompl}
\State $d_1 \leftarrow \alpha_1$ and $l_1 \leftarrow w_1/d_1$
\For{$i = 1$ to $N-1$} 
\If{$i=1$} 
\State Factorize the matrix $V_i - l_1d_1l_1^T = \hat{L}_1D_{S,1}\hat{L}_1^T$
\Else
\State Solve $\hat{L}_{i-1}D_{S, i-1}X = W_{i-1}^T$ for $X \in \RR^{k \times k}$ 
\State $\hat{L}_{i,i-1} \leftarrow X^T$ \Comment{Sub-diagonal block of $L_S$}
\State Factorize the matrix $V_i - \hat{L}_{i,i-1}D_{S,i-1}^{-1}\hat{L}_{i,i-1}^T = \hat{L}_iD_{S,i}\hat{L}_i^T$
\EndIf
\EndFor
\State Solve $\hat{L}_{N-1}D_{S, N-1}l_{2} = w_2$ for $l_{2} \in \RR^k$ 
\State $d_N \leftarrow \alpha_2 - l_{2}^TD_{S,N-1}^{-1}l_{2}$ 
\State $D_S \leftarrow diag(d_1,D_{S,1}, \ldots, D_{S,N}, d_N)$
\State Output: factors $L_S$ and $D_S$ such that $S = L_SD_SL_S^T$
\end{algorithmic}
\end{algorithm}

\section{Meeting the Ill-conditioned $H$}
\label{sec:Tweaks}
The accuracy of the computed solution $u$ of the saddle-point system~\eqref{eq:KKTsystem} with blocks from~\eqref{eq:HandB} by Alg.~\ref{Alg:$LDL^T$ Factorization} depends on the condition number of $H$~\cite[p.~171]{Golub:1996}. For the matrix $K$ is indefinite the condition number of $K$ may be much smaller than the condition number of $H$~\cite[p.~171]{Golub:1996}. Therefore, one may try to find a permutation matrix $P$ such that the factorization $PKP^T = LDL^T$ gives better numerical results. 

One can also find an ill-conditioned $(1,1)$ block in a class of interior point methods, electrical networks modelling and in the finite elements for a heat application~\cite{Vavasis:1994,Vavasis:1996,Wright:1992}. We also encountered the ill-conditioned $(1,1)$ block of $K$ in dynamical system optimization~\cite{Kuratko:2016}. Note that there are applications where the $(1,1)$ block is singular such as time-harmonic Maxwell equations~\cite{Greif:2007,Hiptmair:2002}, and linear dynamical systems in the paper~\cite{Kuratko:2016}.

One such approach to computation of $PKP^T = LDL^T$ is Bunch-Parlett~\cite{Bunch:1971}. However, this leads to a dense factor $L$ as illustrated in Fig.~\ref{fig:KKT_KKTbp}. In addition, finding elements for a pivoting strategy is very expensive and one needs to search a matrix for its maximal off-diagonal element. For a nonsingular symmetric matrix $A  \in \RR^{n \times n}$, the pivoting strategy requires between $n^3/12$ and $n^3/6$ comparisons~\cite[Sec.~6.3]{Bunch:1971}. On the other hand, the Bunch-Parlett factorization is nearly as stable as the \emph{LU} factorization with complete pivoting~\cite{Bunch:1971}. 

\begin{figure}
\centering
\includegraphics[width=0.45\linewidth]{kkt}
\hfill
\includegraphics[width=0.45\linewidth]{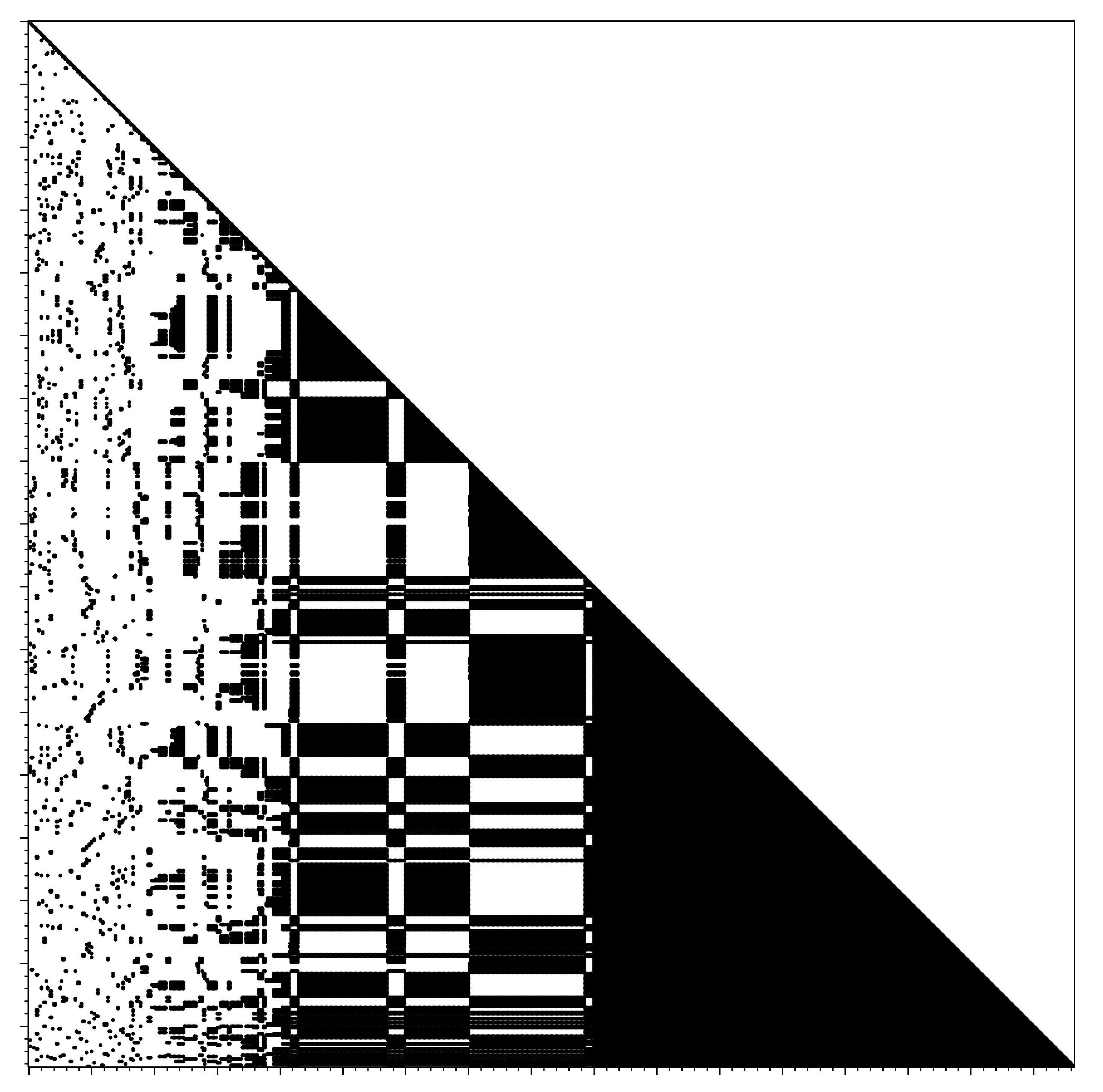}
\caption{The structure of nonzero entries: On the left hand side is the saddle-point matrix $K$ and on the right hand side is the unit lower triangular factor $L$, where $PKP^T = LDL^T$ by Bunch-Parlett. For this instance, there are $9250$ nonzero elements in $K$ and $146291$ nonzero elements in $L$. The dimension of matrices is $832 \times 832$.}
\label{fig:KKT_KKTbp}
\end{figure}

BFGS approximations of the matrix $H$~\eqref{eq:HandB} approach a singular matrix as the iteration process progresses~\cite{Kuratko:2016}. Therefore, the accuracy of solutions computed by Alg.~\ref{Alg:$LDL^T$ Factorization} deteriorates~\cite[p.~171]{Golub:1996}. 
%
This happened to us in several cases when we used benchmark problems from~\cite{Kuratko:2016}. 

However, we observed that the application of Bunch-Parlett instead of Alg.~\ref{Alg:$LDL^T$ Factorization} did not fail and delivered desired results. This leads us to the formulation of a hybrid method Alg.~\ref{Alg:S=Hybrid} that switches at some point from the straightforward $LDL^T$ factorization without pivoting to Bunch-Parlett. Our idea is to use Alg.~\ref{Alg:$LDL^T$ Factorization} as long as possible until the condition number of $D_H$ gets \emph{large}. When this behaviour is detected the method switches for Bunch-Parlett to finish. From numerical testing we found a suitable rule for switching to be that condition number of the diagonal matrix $D_H$ is greater than $1/\sqrt[3]{\varepsilon}$, where the machine precision $\varepsilon \approx 10^{-16}$.
\begin{algorithm}
\caption{Line-search SQP with the Pivoted and Unpivoted Factorization of $K$}
\label{Alg:S=Hybrid}
\begin{algorithmic}
\State Input: $K$ and the right hand side $b$, machine precision $\varepsilon$
\State difficult $\leftarrow False$, $H \leftarrow I$
\While{stopping criteria in \emph{Line-search SQP} are not met}
\If{difficult} \Comment{The matrix $H$ is ill-conditioned}
\State Factorize the matrix $PKP^T = LDL^T$ \Comment{Pivoted factorization}
\State Solve $Lw = Pb$ for $w$
\State Solve $Dz = w$ for $z$
\State Solve $L^Tu = z$ for $u$
\State $u \leftarrow  P^Tu$ \Comment{Permute elements in the solution $u$}
\State Update $H$, $B$ and $b$ from the solution $u$
\Else
\State Factorize $H = L_HD_HL_H^T$
\If{$\kappa(D_H) > 1/\sqrt[3]{\varepsilon}$} \Comment{Check the condition number of $D_H$}
\State difficult $\leftarrow True$
\Else
\State Solve $Ku = b$ for $u$ by the $LDL^T$ factorization \Comment{No pivoting}
\State Update $H$, $B$ and $b$ from the solution $u$
\EndIf
\EndIf
\EndWhile
\State Output: the solution $u$
\end{algorithmic}
\end{algorithm}

Our goal in the next section is to reduce the amount of work spent on searching for pivots in Bunch-Parlett. 

\section{Updating the Matrix $P$}
\label{sec:UpdatesBP}
We do not need to compute a new permutation for the Bunch-Parlett method in every iteration and may try to use and update the one from the previous iteration  in Alg.~\ref{Alg:S=Hybrid}. Let $P_iK_iP_i^T = L_iD_iL_i^T$ be the Bunch-Parlett factorization of the saddle-point matrix $K_i$ from the $i$-th iteration in Alg.~\ref{Alg:S=Hybrid}. In the very next iteration we try to compute $P_iK_{i+1}P_i^T = L_{i+1}D_{i+1}L_{i+1}^T$, where $D_{i+1}$ has the same pattern of $1 \times 1$ and $2 \times 2$ pivots as the matrix $D_i$. Such a factorization of $K_{i+1}$ may not exist and we need to monitor the pivots and update the permutation matrix $P_i$ if necessary. 

In the paper~\cite{Sorensen:1977} there is an algorithm for updating a matrix factorization. However, the method works for matrices of the form $K_{i+1} = K_i + \sigma vv^T$, where $\sigma \in \RR$, $v \in \RR^{n+m}$ and $\sigma vv^T$ is a rank-one matrix. This is not the case in the problem we try to solve. 

We employ the following \emph{monitoring} strategy as we follow the pattern of pivots of $D_i$ and factorize the permuted saddle-point matrix $P_iK_{i+1}P_i^T$. If the $1 \times 1$ pivot $\beta$ satisfies $|\beta| > \varepsilon_1$, where $\varepsilon_1 > 0$, we use it and leave $P_i$ unchanged. If not, we apply the Bunch-Parlett method on the reduced matrix and update the permutation. In the case of the $2 \times 2$ pivot $\beta = \begin{bmatrix} a & b \\ b & c \end{bmatrix}$ we accept it if $| ac - b^2 | > \varepsilon_1$ and $\| \beta \| < \varepsilon_2$. If these conditions do not hold we apply the Bunch-Parlett method on the reduced matrix and update the permutation $P_i$.

The $1 \times 1$ pivot is useful if and only if $|\beta|$ is large relative to the largest off-diagonal element in absolute value~\cite[Sec.~4.2]{Bunch:1971}. Therefore, we only bound its modulus from below and the larger it is the better. However, it may happen that $|\beta|$ is small compared to off-diagonal elements and that causes the increase in the magnitudes of elements in the factor $L$ and the reduced matrix. That is the reason for introducing the second condition on the $2 \times 2$ pivots for which we require $\| \beta \| < \varepsilon_2$. Since we divide by the determinant of the  $2 \times 2$ pivot $\beta$ we need it to be bounded away from zero~\cite[Sec.~4.3]{Bunch:1971}. With this monitoring strategy one is not restricted to Bunch-Parlett and we also tried it for the Bunch-Kaufman method. 

In the rest of this section, we compare the growth factor of the elements in the reduced matrices for the pivot monitoring strategy with Bunch-Parlett~\cite{Bunch:1971}. We use the same notation as in that paper. 
Let $A \in \RR^{n \times n}$ be symmetric and nonsingular such that
\[
A
=
\begin{bmatrix}
A_{1,1} & A_{1,2} \\
A_{1,2}^T & A_{2,2}
\end{bmatrix},
\]
where $A_{1,1} \in \RR^{j\times j}$, $c \in \RR^{j \times (n-j)}$ and $A_{2,2} \in \RR^{(n-j)\times (n-j)}$. If $A_{1,1}^{-1}$ exists, then
\[
A
=
\begin{bmatrix}
I_j & 0 \\
A_{1,2}^TA_{1,1}^{-1} & I_{n-j}
\end{bmatrix}
\begin{bmatrix}
A_{1,1} & 0 \\
0 & A_{2,2} - A_{1,2}^TA_{1,1}^{-1}A_{1,2}
\end{bmatrix}
\begin{bmatrix}
I_j & A_{1,1}^{-T}A_{1,2} \\
0 & I_{n-j}
\end{bmatrix},
\]
where $I_j$ is the identity matrix of order $j$, and $I_{n-j}$ or order $n-j$ respectively. The elements of the matrix $M = A_{1,2}^TA_{1,1}^{-1}$ are called multipliers and we consider $j = 1$ or $2$. We denote by $A^{(n)} = A$ and let $A^{(k)}$ be the reduced matrix of order $k$. In the end, let $\mu_0 = \max_{i,j}\{ |a_{i,j}|\, ; a_{i,j} \in A\}$ and $\mu_1 = \max_i\{ |a_{i,i}|\, ; a_{i,i} \in A \}$.

Suppose that the pivot $\beta = A_{1,1}$ is of order $1$, that is $j = 1$. Under our monitoring strategy we accept $\beta$ for the pivot if $| \beta | > \varepsilon_1$. Then the reduced matrix is $A^{(n-1)} = A_{2,2} - A_{1,2}^T\beta^{-1}A_{1,2}$.
\begin{lemma}
\label{lem:1b1pivotbound}
Let $\varepsilon_1 \in (0,1)$. If $|\beta| > \varepsilon_1$, then
\begin{align*}
m := \max_i\{ |m_i|\,; m_i \in M\} & \leq \frac{\mu_0}{\varepsilon_1}, \\
\mu_0^{(n-1)} := \max_{i,j}\{ |a_{i,j}|\ ; a_{i,j} \in A^{(n-1)}\} & \leq \left( 1 + \frac{\mu_0}{\varepsilon_1} \right)\mu_0.
\end{align*}
\begin{proof}
We follow~\cite[Lem.~1]{Bunch:1971} and replace $\mu_1$ with our lower bound $\varepsilon_1$ on the magnitude of the pivot $\beta$.
\end{proof}
\end{lemma}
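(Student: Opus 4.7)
The plan is to mirror the proof of Lemma~1 in~\cite{Bunch:1971}, substituting our acceptance criterion $|\beta| > \varepsilon_1$ for the Bunch-Parlett threshold, which is expressed relative to $\mu_1$. The starting point is the block factorization displayed just before the lemma, specialised to $j = 1$: the multipliers are the entries of the column vector $M = A_{1,2}^T \beta^{-1}$, and the reduced matrix is $A^{(n-1)} = A_{2,2} - A_{1,2}^T \beta^{-1} A_{1,2}$.

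For the multiplier bound, I would observe that each entry of $M$ has the form $m_i = (A_{1,2})_i/\beta$. Since $(A_{1,2})_i$ is an entry of $A$, it satisfies $|(A_{1,2})_i| \le \mu_0$ by definition of $\mu_0$, and by hypothesis $|\beta|^{-1} < 1/\varepsilon_1$. Combining these gives $|m_i| \le \mu_0/\varepsilon_1$, which is the first inequality.

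For the reduced matrix I would write an arbitrary entry as $a_{i,j}^{(n-1)} = (A_{2,2})_{i,j} - m_i\,(A_{1,2})_j$. The triangle inequality together with $|(A_{2,2})_{i,j}| \le \mu_0$, $|(A_{1,2})_j| \le \mu_0$, and the bound on $m_i$ just proved yields $|a_{i,j}^{(n-1)}| \le \mu_0 + (\mu_0/\varepsilon_1)\mu_0 = (1 + \mu_0/\varepsilon_1)\mu_0$. Taking the maximum over $i,j$ gives the stated bound on $\mu_0^{(n-1)}$.

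There is no real technical obstacle; the only care needed is to note that both quantities to which $\mu_0$ is applied, namely the numerator of $m_i$ and the entries appearing in the Schur update, are genuinely elements of the original matrix $A$, so the uniform bound $\mu_0$ is legitimate in each case. The hypothesis $\varepsilon_1 \in (0,1)$ is not used in the algebra itself; it only reflects that the bound is informative precisely when $\beta$ can be smaller than a typical entry, in which case growth by a factor $1/\varepsilon_1$ is the worst one can incur.
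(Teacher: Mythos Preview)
Your proof is correct and is precisely the approach taken in the paper: the paper simply cites \cite[Lem.~1]{Bunch:1971} and notes that one replaces $\mu_1$ by the lower bound $\varepsilon_1$, while you have written out that substitution explicitly. Your remark that the hypothesis $\varepsilon_1 \in (0,1)$ plays no role in the algebra is also accurate.
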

We observe in Lemma~\ref{lem:1b1pivotbound} that the bound is more pessimistic than the bound in~\cite[Lem.~1]{Bunch:1971} for Bunch-Parlett. In more detail the bound $m < 1.562$, see~\cite[Lem.~5]{Bunch:1971}, on multipliers may not hold under our monitoring strategy. The reason behind this is that we take the pivot $\beta$ even if $|\beta| < \mu_0(1+\sqrt{17})/8$ as long as $|\beta| > \varepsilon_1$. Therefore, during the factorization the elements in the reduced matrix may grow in magnitude rapidly.

Suppose the pivot $\beta$ is of order $2$, that is $j = 2$. We accept $\beta$ if $|\det \beta| > \varepsilon_1$ and $\| \beta \| < \varepsilon_2$. Then the matrix $M \in \RR^{(n-2)\times 2}$.
\begin{lemma}
\label{lem:2b2pivotbound}
Let $\varepsilon_1 \in (0,1)$. If $|\det \beta| > \varepsilon_1$, then
\begin{align*}
m := \max_{i,j}\{ |m_{i,j}|\,; m_{i,j} \in M\} & \leq \frac{\mu_0(\mu_0 + \mu_1)}{\varepsilon_1}, \\
\mu_0^{(n-2)} := \max_{i,j}\{ |a_{i,j}|\ ; a_{i,j} \in A^{(n-2)}\} & \leq \left( 1 + \frac{2\mu_0(\mu_0 + \mu_1)}{\varepsilon_1} \right)\mu_0,\\
\varepsilon_1 < |\det \beta | & \leq \mu_0^2 + \mu_1^2.
\end{align*}
\begin{proof}
The first and the second inequality follows from~\cite[Lem.~2]{Bunch:1971}, where we use the lower bound $\varepsilon_1$ instead of $\det \beta$ in the denominator. The last chain of inequalities follows partially from our assumption that  $|\det \beta| > \varepsilon_1$ and~\cite[Lem.~3]{Bunch:1971}.
\end{proof}
\end{lemma}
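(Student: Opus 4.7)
The plan is to mimic the argument of \cite[Lem.~2]{Bunch:1971} essentially verbatim, making one substitution throughout: wherever that proof divides by $|\det \beta|$, I replace the denominator with its lower bound $\varepsilon_1$ supplied by the monitoring strategy. This weakens every estimate by the factor $|\det\beta|/\varepsilon_1 \geq 1$, which reflects precisely the price paid for accepting pivots whose determinants may be smaller than what Bunch--Parlett would admit.

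For the first inequality I would write the $2\times 2$ pivot as $\beta = \begin{bmatrix} a & b \\ b & c \end{bmatrix}$, where $|a|,|c| \leq \mu_1$ are diagonal entries of $A$ and $|b| \leq \mu_0$ is off-diagonal. The adjugate is $\mathrm{adj}(\beta) = \begin{bmatrix} c & -b \\ -b & a \end{bmatrix}$, so each multiplier $m_{i,j}$ is a linear combination of two entries of $A_{1,2}$ (each bounded by $\mu_0$) with coefficients drawn from $\beta^{-1} = \mathrm{adj}(\beta)/\det\beta$: one coefficient comes from the diagonal of $\mathrm{adj}(\beta)$ (bounded by $\mu_1$) and one from the off-diagonal (bounded by $\mu_0$). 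The triangle inequality then yields
\[
|m_{i,j}| \;\leq\; \frac{\mu_0(\mu_0 + \mu_1)}{|\det\beta|} \;<\; \frac{\mu_0(\mu_0+\mu_1)}{\varepsilon_1},
\]
as claimed.

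For the second inequality I would expand the reduced-matrix entry as $a_{i,j}^{(n-2)} = a_{i,j} - \sum_{k=1}^2 m_{i,k}(A_{1,2})_{k,j}$, bound $|a_{i,j}| \leq \mu_0$ and each product $|m_{i,k}(A_{1,2})_{k,j}| \leq m\,\mu_0$, and then substitute the bound on $m$ from the previous step. For the third line of inequalities, the lower bound $\varepsilon_1 < |\det\beta|$ is the hypothesis, while $|\det\beta| = |ac - b^2| \leq |a||c| + |b|^2 \leq \mu_1^2 + \mu_0^2$ is immediate from the same entry classification.

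The main thing to watch is the distinction between $\mu_0$ and $\mu_1$ at each appearance: diagonal entries (of $\beta$ and of its adjugate) are bounded by $\mu_1$, while everything else (off-diagonal of $\beta$, entries of $A_{1,2}$, and the off-diagonal of the adjugate) is bounded only by $\mu_0$. Tracking this asymmetry is what yields the tight coefficient $(\mu_0+\mu_1)$ rather than a coarser $2\mu_0$; beyond this bookkeeping there is no genuine technical obstacle, and the argument is largely a symbolic substitution into the Bunch--Parlett proof.
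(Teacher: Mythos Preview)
Your proposal is correct and follows essentially the same route as the paper: invoke the Bunch--Parlett bounds for the $2\times 2$ case and replace $|\det\beta|$ in the denominator by its lower bound $\varepsilon_1$, then read off the determinant bound from the entrywise classification of $\beta$. In fact you have written out more detail than the paper's own proof, which simply cites \cite[Lem.~2 and Lem.~3]{Bunch:1971} and notes the substitution.
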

Note that we accept the pivot $\beta$ even if $\mu_1 > \mu_0$ as long as  $|\det \beta| > \varepsilon_1$ and $\| \beta \| < \varepsilon_2$ hold.  Due to this fact we cannot derive the bound $\mu_0^{(k)} < (2.57)^{n-k}\mu_0$ as in the paper~\cite{Bunch:1971}. In our case the bound $m$ on multipliers depends on $1/\varepsilon_1$, therefore, the overall bound on the growth of elements in the reduced matrix $A^{(k)}$ will contain powers of $1/\varepsilon_1$. Because of this very reason we place the condition $\| \beta \| < \varepsilon_2$ on the $2\times2$ pivot trying to meet the undesired growth of elements in the reduced matrices.

A less expensive method than Bunch-Parlett is Bunch-Kaufman~\cite{Kaufman:1977}, since it requires only $O(n^2)$ comparisons when searching the matrix for pivots. It is accepted as the algorithm of choice when solving symmetric indefinite linear systems. Similarly to Bunch-Parlett one can show that the growth of elements in the reduced matrices is bounded~\cite{Kaufman:1977}, however, this time there is no bound on the entries of the factor $L$~\cite[Sec.~2.2]{Ashcraft:1998}. Therefore, it gives lower accuracy and can even be unstable~\cite{Ashcraft:1998}. There are ways around this as described in~\cite{Ashcraft:1998}, however, then the modified Bunch-Kaufman requires a higher number of comparisons lying somewhere between the number of comparisons of  Bunch-Kaufman and Bunch-Parlett.

Our proposed heuristic also does not bound entries in the factor $L$, however, it tries to skip the search for pivots. In Section~\ref{sec:BenchRes} we compare the pivot monitoring strategy with Bunch-Parlett and Bunch-Kaufman on a series of benchmarks.

\section{Computational Experiments}
\label{sec:BenchRes}
In this section we apply our method~\ref{Alg:S=Hybrid} to two benchmark problems from the paper~\cite{Kuratko:2016}. We test Alg.~\ref{Alg:S=Hybrid} with and without the monitoring strategy for updating the permutation matrix and compare the results. Both following benchmark problems are described in detail in the paper~\cite{Kuratko:2016}. In this paper, we consider only equality constraints, hence, the block $C$ in~\eqref{eq:KKTsystem} is the zero matrix. For the reader's convenience we describe the governing differential equations of those dynamical systems.

The first benchmark problem~\cite[Sec.~6.2]{Kuratko:2016} is a linear dynamical system given by
\[
\dot{x} = 
\begin{bmatrix}
\begin{bmatrix}
0 & 1 \\
-1 & 0
\end{bmatrix}
 & & \\
 & \ddots & \\
 & &
 \begin{bmatrix}
0 & 1 \\
-1 & 0
\end{bmatrix}
\end{bmatrix}x,
\]
where the statespace dimension is $k \in \mathbb{N}$. It was shown in~\cite{Kuratko:2016} that the Hessian matrix is singular and observed that the BFGS approximation of the Hessian approaches a singular matrix. This leads to a saddle-point matrix $K$ that has an ill-conditioned $(1,1)$ block.

The second benchmark problem~\cite[Sec.~6.3]{Kuratko:2016} is a nonlinear dynamical system such that
\[
\dot{x} = 
\begin{bmatrix}
\begin{bmatrix}
0 & 1 \\
-1 & 0
\end{bmatrix}
 & & \\
 & \ddots & \\
 & &
 \begin{bmatrix}
0 & 1 \\
-1 & 0
\end{bmatrix}
\end{bmatrix}x
+
\begin{bmatrix}
\sin(x_k) \\
\vdots \\
\sin(x_1)
\end{bmatrix},
\]
where the statespace dimension is $k \in \mathbb{N}$. Similar to the benchmark above the BFGS approximations of the Hessian approach singular matrix~\cite{Kuratko:2016}. 

In both benchmark problems the sets $\init$ and $\unsafe$ are balls of radius $1/4$. The stopping criteria on the norm of the gradient, the norm of the vector of constraints, the maximum number of iterations and the minimal step-size are the same as in the paper~\cite{Kuratko:2016}. For our monitoring strategy in Alg.~\ref{Alg:S=Hybrid} we set $\varepsilon_1 = 10^{-3}$ and $\varepsilon_2 = 10^6$. The results for the first benchmark problem are shown in the Tab.~\ref{tab:ODE01HYBandMON} and for the second benchmark problem in the Tab.~\ref{tab:ODE07HYBandMON} respectively. In all instances we were able to find a desired solution from $N$ solution segments for which $x_0^1 \in \init$ and $\Phi(\sum t_i, x_0^1) \in \unsafe$.

Both tables~\ref{tab:ODE01HYBandMON} and~\ref{tab:ODE07HYBandMON} have four parts. The first part consists of two columns denoted by $k$ -- the statespace dimension of the dynamical system and by $N$ -- the number of solution segments. The second part corresponds to Alg.~\ref{Alg:S=Hybrid} with no monitoring of pivots and there are three columns: $\#IT$ -- the number of iterations in Alg.~\ref{Alg:S=Hybrid}, $\#LDL^T$ -- the number of straightforward $LDL^T$ factorizations, and $\#B$-$P$/$B$-$K$ -- the number of Bunch-Parlett/Bunch-Kaufman factorizations. The third part shows the results of Alg.~\ref{Alg:S=Hybrid} with the monitoring of pivots. The meaning of columns $\#IT$ and $\#LDL^T$ remains the same. However, the column denoted by ``$\#$upd of $P$'' shows how many times the matrix $P$ was computed and updated. 

In the end, the last column denoted by $R$ gives the ratio of the number of Bunch-Parlett/Bunch-Kaufman factorizations to the number of updates of $P$ using the pivot monitoring, that is
\[
R :=\left\lfloor
\frac{\text{$\#B$-$P$/$B$-$K$}}{\text{$\#$upd of $P$}}
\right\rfloor .
\]
One can interpret $R$ in the following way. As $R$ approaches $\#B$-$P$/$B$-$K$, then Alg.~\ref{Alg:S=Hybrid} reuses pivots almost all the time. Especially, when $R = \#B$-$P$/$B$-$K$, then we carry the search for pivots only once. When $R$ approaches $1$, then Alg~\ref{Alg:S=Hybrid} searches $K$ for pivots more frequently, and ultimately when $R = 1$ then it uses standard Bunch-Parlett/Bunch-Kaufman throughout. 

It may happen that the sum of numbers from $\#LDL^T$ and $\#B$-$P$/$B$-$K$ columns is greater than the number in the column $\#IT$ because of the restarts in the LS-SQP method~\cite{Kuratko:2016}. Whenever there is a single value in a column, then both Bunch-Parlett and Bunch-Kaufman yield the same results. If the values in columns $\#IT$ in one row differ we do not compute $R$. The same applies when only the $LDL^T$ factorization with no pivoting was used.

We read the results in Tab.~\ref{tab:ODE01HYBandMON} and~\ref{tab:ODE07HYBandMON} in the following way. For example the last row of Tab.~\ref{tab:ODE01HYBandMON} is: the statespace dimension $k = 40$ and the number of solution segments $N = 30$ result in the optimization problem with $30(40+1) = 1230$ parameters and $(30-1)40+2 = 1162$ equality constraints. Then the saddle-point matrix $K$ is of order $2392$; Alg.~\ref{Alg:S=Hybrid} with no monitoring of pivots took $59$ iterations from which the matrix $K$ was factorized $9$ times by $LDL^T$ with no pivoting and $50$ times by Bunch-Parlett/Bunch-Kaufman. When we used Alg.~\ref{Alg:S=Hybrid} with the monitoring of pivots the matrix $P$ was computed once in the $10$th iteration. From this point onwards it was updated twice, therefore, the matrix $P$ was reused $47$ times. The ratio $R$ is then $\lfloor50:3\rfloor = 16$.

\begin{table}[h!]
\centering
\newcolumntype{g}{>{\columncolor{fc}}c}
\newcolumntype{j}{>{\columncolor{sc}}c}
\begin{tabular}{*{2}{g}|*{3}{g}||*{3}{j}|c}
$k$ & $N$ & $\#IT$ & $\#LDL^T$ & $\#B$-$P$/$B$-$K$ & $\#IT$ & $\#LDL^T$ & $\#$upd of $P$ & $R$ \\
\hline
 $10$  &  $5$  &  $47$  &  $5$  &  $42$  &  $47$  &  $5$  &  $1$ & $42$\\
  &  $10$  &  $26$  &  $19$  &  $7$ &  $26$  &  $19$  &  $1$ & $7$\\
  &  $15$  &  $31$  &  $17$  &  $14$  &  $31$  &  $17$  &  $1$ & $14$\\
  &  $20$  &  $400$  &  $3$  &  $418/408$  &  $111/258$  &  $3$  &  $2/5$ & -\\
  &  $25$  &  $45$  &  $17$  &  $28$  &  $45$  &  $17$  &  $1$ & $28$ \\
  &  $30$  &  $131/125$  &  $11$  &  $120/116$  &  $89/78$  &  $11$  &  $8/4$ & - \\ 
 $20$  &  $5$  &  $25$  &  $27$  &  $0$  &  $25$  &  $27$  &  $0$ & -\\
  &  $10$  &  $27$  &  $27$  & $0$  &  $27$  &  $27$  &  $0$ & - \\
  &  $15$  &  $24$  &  $9$  &  $15$  &  $24$  &  $9$  &  $1$ &  $15$\\
  &  $20$  &  $23$  &  $3$  &  $20$  &  $23$  &  $3$  &  $1/3$ & $20/6$\\
  &  $25$  &  $27$  &  $5$  &  $22$  &  $27$  &  $5$  &  $1/2$ & $22/11$\\
  &  $30$  &  $34$  &  $20$  &  $14$  &  $34$  &  $20$  &  $1$ & $14$\\
 $30$  &  $5$  &  $36$  &  $36$  &  $0$  &  $36$  &  $36$  &  $0$ & - \\
 &  $10$  &  $24$  &  $24$  &  $0$  &  $24$  &  $24$  &  $0$ & - \\
 &  $15$  &  $26$  &  $10$  &  $16$  &  $26$  &  $10$  &  $1$ & $16$\\
 &  $20$  &  $28$  &  $23$  &  $5$  &  $28$  &  $23$  &  $1$ & $5$\\
 &  $25$  &  $24$  &  $10$  &  $14$  &  $24$  &  $10$  &  $1$  & $14$\\
 &  $30$  &  $24$  &  $6$  &  $18$  &  $24$  &  $6$  &  $1$ & $18$\\
 $40$  &  $5$  &  $41$  &  $41$  &  $0$  &  $41$  &  $41$  &  $0$ & -\\
  &  $10$  &  $24$  &  $24$  &  $0$  &  $24$  &  $24$  &  $0$ & - \\
  &  $15$  &  $24$  &  $3$  &  $21$  &  $24$  &  $3$  &  $1/2$ & $21/10$\\
  &  $20$  &  $20$  &  $5$  &  $15$  &  $20$  &  $5$  &  $1$ & $15$\\
  &  $25$  &  $37$  &  $11$  &  $26$ &  $37$  &  $11$  &  $1$ & $26$\\
  &  $30$  &  $59$  &  $9$  &  $50$  &  $59$  &  $9$  &  $3$ & $16$\\  
\end{tabular}
\caption{The results for the linear benchmark problem~\cite[Sec.~6.2]{Kuratko:2016}. On the left hand side there are the parameters, in the middle the results of Alg.~\ref{Alg:S=Hybrid} with no monitoring of pivots and on the right hand side the results of Alg.~\ref{Alg:S=Hybrid} with the monitoring.}
\label{tab:ODE01HYBandMON}
\end{table}
\begin{table}[h!]
\centering
\newcolumntype{g}{>{\columncolor{fc}}c}
\newcolumntype{j}{>{\columncolor{sc}}c}
\begin{tabular}{*{2}{g}|*{3}{g}||*{3}{j}|c}
$k$ & $N$ & $\#IT$ & $\#LDL^T$ & $\#B$-$P$/$B$-$K$ & $\#IT$ & $\#LDL^T$ & $\#$upd of $P$ & $R$ \\
\hline
 $10$  &  $5$  &  $40$  &  $40$  &  $0$  &  $40$  &  $40$  &  $0$ 		& -	\\
   &  $10$  &  $246$  &  $175$  &  $71$  &  $246$  &  $175$  &  $1$ & $71$	\\
   &  $15$  &  $115$  &  $36$  &  $79$  &  $115$  &  $36$  &  $2/1$	& $39/79$		\\
   &  $20$  &  $98$  &  $17$  &  $81$  &  $98$  &  $17$  &  $1/2$ 		& $81/40$	\\
   &  $25$  &  $400$  &  $14$  &  $386$  &  $400$  &  $14$  &  $4/2$ 	& $96/193$	\\
   &  $30$  &  $168$  &  $11$  &  $157$  &  $168$  &  $11$ &  $1$ 	& $157$	\\
 $20$  &  $5$  &  $67$  &  $17$  &  $50$  &  $67$  &  $17$  &  $1$ 	& $50$		\\
 &  $10$  &  $50$ &  $19$  &  $31$  &  $50$  &  $19$  &  $1$ 	& $31$			\\
 &  $15$  &  $97$  &  $42$  &  $55$  &  $97$  &  $42$  &  $1/2$ 	& $55/27$	\\
 &  $20$  &  $34$  &  $8$  &  $26$  &  $34$  &  $8$  &  $1/2$ 	& $26/13$			\\
 &  $25$  &  $289$  &  $10$  &  $279$  &  $289$  &  $10$  &  $2$ 	&	$139$	\\
 &  $30$  &  $109$  &  $22$  &  $87$  &  $109$ &  $22$  &  $1$ 		&	$87$	\\
 $30$  &  $5$  &  $40$  &  $30$  &  $10$  &  $40$  &  $30$  &  $1$ 		&	$10$	\\
  &  $10$ &  $47$  &  $30$  &  $17$  &  $47$  &  $30$  &  $1$ 			&		$17$	\\
  &  $15$  &  $400$  &  $12$  &  $388$  &  $400$  &  $12$  &  $4$ 		&	$97$	\\
  &  $20$  &  $51$  &  $3$  &  $48$  &  $51$  &  $3$  &  $1/3$ 			&	$48/16$	\\
  &  $25$  &  $109/107$  &  $3$  &  $106/104$  &  $108/113$  &  $3$  &  $1/3$ &   -   \\
  &  $30$  &  $400/177$  &  $20$  &  $380/157$ &  $400/374$  &  $20$  &  $2/4$ &     -  \\
 $40$  &  $5$  &  $113$  &  $49$  &  $64$  &  $113$  &  $49$  &  $1$     &     $64$ \\
  &  $10$  &  $400$  &  $97$  &  $303$  &  $400$  &  $97$  &  $1$       &     $303$ \\
  &  $15$  &  $210/211$  &  $10$ &  $200/201$  &  $207/213$  &  $10$  &  $1/2$  & -    \\
  &  $20$  &  $58$  &  $25$  &  $33$  &  $58$  &  $25$  & $1$    &   $33$\\
  &  $25$  &  $134$  &  $1$  &  $133$  &  $134$  &  $1$  &  $1/3$ &    $133/44$ \\
  &  $30$  &  $146$  & $12$  &  $134$  &  $146/155$  &  $12$  &  $4/5$ &   -\\  
\end{tabular}
\caption{The results for the nonlinear benchmark problem~\cite[Sec.~6.3]{Kuratko:2016}. On the left hand side there are the parameters, in the middle the results of Alg.~\ref{Alg:S=Hybrid} with no monitoring of pivots and on the right hand side the results of Alg.~\ref{Alg:S=Hybrid} with the monitoring.}
\label{tab:ODE07HYBandMON}
\end{table}

We demonstrated in Tab.~\ref{tab:ODE01HYBandMON} and~\ref{tab:ODE07HYBandMON} that we can switch between a cheap factorization ($LDL^T$ without pivoting) and Bunch-Parlett. In addition we can minimize the cost of finding the pivots in the Bunch-Parlett method. The monitoring strategy that allows us to reuse the permutation matrices in the Bunch-Parlett method is independent of our application and may be used in other problems as well.

We can compare the results in Tab.~\ref{tab:ODE01HYBandMON} and~\ref{tab:ODE07HYBandMON} with the results in the paper~\cite{Kuratko:2016}, where the preconditioned projected conjugate gradient (PPCG) method~\cite{LuksanVlcek:2001,Nocedal:2006} was used. For the linear benchmark problem the Alg.~\ref{Alg:S=Hybrid} required less iterations of the LS-SQP. However, in the nonlinear case, the results are inconclusive.

All the computations were carried out in Scilab 5.5.2~\cite{Scilab} on a computer Intel(R) Xeon(R) CPU X5680 @ 3.33GHz with the operating system Cent OS 6.8. We used the built in ode solver \emph{ode} in the default settings and the \emph{backslash} operator for solving systems of linear equations in Alg.~\ref{Alg:$LDL^T$ Factorization}--\ref{Alg:S=LDLt} and~\ref{Alg:S=Hybrid}.

%
\section{Conclusion}
\label{sec:Conclusion}


We proposed and tested a pivot monitoring strategy that allows us to reuse and update permutation matrices. Therefore, we reduced the cost of finding the pivots in solving a sequence of a saddle-point systems. Numerical experiments show that this successfully speeds up computation in the frame of dynamical systems optimization. 

The result is a method that is less stable than the unmodified Bunch-Parlett, as shown in Section~\ref{sec:UpdatesBP}. However, practice has shown this is very often not a big concern. For example Ashcraft at al.~\cite[p.~552]{Ashcraft:1998}, observe that for sparse matrices and symmetric factorizations ``very often less stable algorithms appear to perform numerically just as well as more reliable algorithms.'' Another observation is that the unpivoted factorization requires less time and storage than pivoted factorizations with stability guarantees~\cite[p.~552]{Ashcraft:1998}. Our experiments confirm those observations.

Another observation that we find useful is the following. When matrices $K_i$ have fixed structure of nonzero entries and the matrix $P$ remains unchanged, then the structure of nonzero entries in the factor $L$ remains the same. This becomes interesting for memory allocation of sparse matrices. Keeping the same matrix $P$ is a sort of data preprocessing. We can arrange matrices so that the pivots are on the diagonal during the factorization. 
\bibliographystyle{abbrv}
\bibliography{../bibliography/kuratko}

\end{document}